\DeclareMathOperator{\id}{id}
\newtheorem{satz}{Satz}[subsection]
\newtheorem*{theorem*}{Theorem}
\newtheorem{proposition}[satz]{Proposition}
\newtheorem*{definition*}{Definition}
\renewcommand{\abstract}{\textbf{Abstract}}
\date{}
\begin{document}
\title{\vspace{-0.5cm}\textbf{\textit{On the $\mathfrak{grt}$ hexagon symmetry}}}\vspace{-0.8cm}
\author{\textit{Johannes L\"offler}}\vspace{-0.7cm}
\maketitle
\begin{abstract} In this paper we show that it is possible to project onto the solutions of the $\mathfrak{grt}$ hexagon equation. We also consider in some sense generalized hexagon equations and other symmetry equations for multiple argument maps between groups or torsors as source and show that for these equations we can construct at least some canonical solutions by symmetrization procedures. With help of this solutions we finally introduce a bunch of differentials associated to the considered symmetries.
\end{abstract}
\vspace{-0.2cm}\subsubsection*{\textit{Introduction}}
The Grothendieck-Teichm\"uller group was originally introduced by Drinfeld in his seminal article \cite{Dri}, his studies of deformations of quasi-triangular quasi-Hopf quantized universal enveloping algebras, as the set of degenerate associators. Associators also appear in knot theory and there are deep connections to number theory. 
Let $\mathfrak{lie}_2=\mathbb{K}\ll\hspace{-0.1cm}{x,y}\hspace{-0.1cm}\gg$ denote the completed free lie algebra in two generators. The Drinfeld-Kohno Lie algebra $\mathfrak{t}_n$ for $n\geq2$ is generated by symbols $t_{ij}=t_{ji}$, $1\leq{i,j}\leq{n}$, $i\neq{j}$ with the relations
$[t_{ij},t_{kl}]=0\;\text{if}\;\vert\{i,j,k,l\}\vert=4$ and 
$[t_{ij},t_{ik}+t_{kj}]=0\;\text{if}\;\vert\{i,j,k\}\vert=3$. The Grothendieck-Teichm\"uller Lie algebra $\mathfrak{grt}$ is defined as the vector space of solutions $\phi(x,y)\in\mathfrak{lie}_2$ of the equations
$\phi(x,y)=-\phi(y,x)$ (skew-symmetry)
\begin{equation}\label{Hexa}
\phi(x,y)+\phi(y,-x-y)+\phi(-x-y,x)=0
\end{equation}
\vspace{-0.7cm}\begin{equation}\label{Penta}
\phi(t_{12},t_{23}+t_{24})+\phi(t_{13}+t_{23},t_{34})\hspace{-0.1cm}=\hspace{-0.1cm}\phi(t_{23},t_{34})+\phi(t_{12}+t_{13},t_{24}+t_{34})+\phi(t_{12},t_{23})
\end{equation}
The equation \ref{Hexa} is called the hexagon and \ref{Penta} the pentagon. As the name suggests there is a Lie algebra structure on solutions of the previous equations called the Ihara bracket \cite{Iha}: For $f\in\mathfrak{lie}_2$ we define a derivation $D_f$ of $\mathfrak{lie}_2$ by setting
$D_f(x)=0$,
$D_f(y)=[y,f]$
and the Ihara bracket $\{\cdot,\cdot\}$ is defined by
$\{f,g\}:=[f,g]+D_f(g)-D_g(f)$.

Kontsevich's formality theorem \cite{K} yields the most general approach to deformation quantization. Tamarkin gave an independent proof of the formality theorem for $\mathbb{R}^d$ with operadic methods \cite{TaF}, the globalisation of this result has been established by Halbout \cite{Ha}. Drinfeld associators are essential in Tamarkin's proof of formality of little discs, a step in his operadic formality proof. Willwacher proved that the Grothendieck-Teichm\"uller Lie algebra $\mathfrak{grt}$ is isomorphic to the zeroth cohomology of the Kontsevich graph complex, considered as a Lie algebra \cite{TRT}. A result of Dolgushev states that the action of the Grothendieck-Teichm\"uller group $\mathrm{GRT}$ on the set of homotopy classes of stable formality quasi-isomorphisms is transitive and faithful \cite{DoSt}. Willwacher \cite{WiGRT} upgraded this result to the more general Shoikhet-Tsygan formality maps of chains, see \cite{ShTsy}.


{\textbf{Acknowledgements}:} I am deeply grateful to H. Furusho for explanations and useful comments that motivated some generalizations. I like to thank R. Friedrich, R.-M. Kaufmann, C. Mautner, S. Merkulov and T. Willwacher for discussions, especially D. Radchenko for pointing out the reference \cite{Khr} and G. Schaumann for his comment that an observation does not restrict to the abelian case. Last but not least I thank the MPIM \textsc{Max-Planck-Institute For Mathematics} in Bonn for hospitality and financial support. johannes@mpim-bonn.mpg.de
\section{\textit{Projective solution of the $\mathfrak{grt}$ hexagon equation}}

An important result of Furusho \cite{Fu} is that the solutions of the pentagon equation without quadratic term satisfy skew-symmetry and also the hexagon, see also \cite{TRT} and \cite{DBN} for alternative proofs of this highly non-trivial statement. Hence skew-symmetry and hexagon are not really necessary to define $\mathfrak{grt}$, but nevertheless it is clear that we can project onto the solutions of the first condition by skew-symmetrization and we claim that it is also easy to project onto the solutions of the hexagon, in representation theory this result can also be seen as a special case of a procedure known as Young projectors:
\begin{proposition}\label{Hexagon} The operator $\mathcal{H}:\mathfrak{lie}_2\rightarrow\mathfrak{lie}_2$ defined by the formula
$$(\mathcal{H}\varphi)(x,y):=[2\varphi(x,y)-\varphi(y,-x-y)-\varphi(-x-y,x)]/3$$
projects onto solutions of the hexagon equation.
\end{proposition}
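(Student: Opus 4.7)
The hexagon equation has an obvious $\mathbb{Z}/3$ symmetry, so the statement should follow from the general fact that, for a cyclic group action of order $n$, the averaging operator $\frac{1}{n}\sum_k \sigma^k$ is the projector onto invariants, while $1-\frac{1}{n}\sum_k \sigma^k$ is the projector onto the kernel of $\sum_k \sigma^k$. I would formalize this in four short steps.

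\textbf{Step 1: Define the cyclic shift.} Introduce the linear operator $\sigma:\mathfrak{lie}_2\to\mathfrak{lie}_2$ by $(\sigma\varphi)(x,y):=\varphi(y,-x-y)$. Since the substitution $(x,y)\mapsto(y,-x-y)$ sends the free generators $x,y$ to a pair which still generates $\mathfrak{lie}_2$ (we have $x=-(-x-y)-y$), it extends uniquely to a Lie algebra automorphism of $\mathfrak{lie}_2$, so $\sigma$ is well-defined on the whole completed free Lie algebra. A direct computation of the substitution shows
$$(\sigma^2\varphi)(x,y)=\varphi(-x-y,x),\qquad (\sigma^3\varphi)(x,y)=\varphi(x,y),$$
so $\sigma^3=\mathrm{id}$.

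\textbf{Step 2: Rewrite hexagon and $\mathcal{H}$ in terms of $\sigma$.} In this notation the hexagon equation \eqref{Hexa} reads $(1+\sigma+\sigma^2)\varphi=0$, and the operator from the proposition becomes
$$\mathcal{H}=\tfrac{1}{3}(2-\sigma-\sigma^2)=\mathrm{id}-\tfrac{1}{3}(1+\sigma+\sigma^2).$$
Set $P:=\tfrac{1}{3}(1+\sigma+\sigma^2)$, so $\mathcal{H}=\mathrm{id}-P$.

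\textbf{Step 3: $P$ is a projector.} The key identity is $(1+\sigma+\sigma^2)^2=3(1+\sigma+\sigma^2)$, which follows immediately by expanding the square and using $\sigma^3=\mathrm{id}$, $\sigma^4=\sigma$. Dividing by $9$ gives $P^2=P$, so $P$ is an idempotent with image equal to the $\sigma$-invariants and kernel equal to $\ker(1+\sigma+\sigma^2)$, i.e.\ the solutions of the hexagon.

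\textbf{Step 4: Conclude.} From $P^2=P$ one obtains $\mathcal{H}^2=(\mathrm{id}-P)^2=\mathrm{id}-2P+P^2=\mathrm{id}-P=\mathcal{H}$, so $\mathcal{H}$ is idempotent. Moreover $(1+\sigma+\sigma^2)\mathcal{H}\varphi=(1+\sigma+\sigma^2)\varphi-3P\varphi\cdot\tfrac{1}{1}=(1+\sigma+\sigma^2)\varphi-(1+\sigma+\sigma^2)\varphi=0$, showing the image of $\mathcal{H}$ lies in the solution set of the hexagon. Conversely, if $\varphi$ satisfies the hexagon then $P\varphi=0$ and hence $\mathcal{H}\varphi=\varphi$, so $\mathcal{H}$ is the identity on hexagon solutions. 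Together these give that $\mathcal{H}$ projects onto hexagon solutions.

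\textbf{Expected main obstacle.} There is no real obstacle; the computation is essentially formal once one recognizes the $\mathbb{Z}/3$ symmetry. The only point that requires a moment's attention is verifying that the substitution $(x,y)\mapsto(y,-x-y)$ really does define an automorphism of the completed free Lie algebra (so that $\sigma$ acts on all of $\mathfrak{lie}_2$, not just on formal symbols) and that it has order exactly $3$; everything else is the standard group-algebra identity for averaging over a cyclic group.
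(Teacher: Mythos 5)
Your proof is correct, and at its core it is the same computation as the paper's, just packaged through the standard group-algebra averaging idempotent. The paper never introduces the single cyclic shift $\sigma$: it works directly with the combined operator $\alpha=\sigma+\sigma^{2}$, i.e.\ $(\alpha\varphi)(x,y)=\varphi(y,-x-y)+\varphi(-x-y,x)$, verifies by explicit substitution the quadratic relation $\alpha^{2}=2\id+\alpha$, and then finds the projector by a one-parameter ansatz $\varphi_{\lambda}=\varphi+\lambda\alpha\varphi$, computing $\varphi_{\lambda}+\alpha\varphi_{\lambda}=(1+2\lambda)(\varphi+\alpha\varphi)$ and normalizing at $\lambda=-1/2$. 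Your relation $(1+\sigma+\sigma^{2})^{2}=3(1+\sigma+\sigma^{2})$ is literally equivalent to $\alpha^{2}=2\id+\alpha$ under $\alpha=3P-\id$, so the two arguments are interchangeable; what your version buys is that the $\mathbb{Z}/3$ structure (which the paper only alludes to via ``Young projectors'' and exploits more explicitly later, in Propositions \ref{BH} and \ref{GH}) is visible from the start, that $\mathcal{H}=\id-P$ is manifestly idempotent without the ansatz-and-normalize step, and that the argument generalizes verbatim to any cyclic symmetry $\sigma^{n}=\id$. You also supply a point the paper leaves implicit, namely that the substitution $(x,y)\mapsto(y,-x-y)$ is a well-defined automorphism of the completed free Lie algebra. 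What the paper's route buys in exchange is the extra information recorded right after the proof: the composition law $(\id+\beta\alpha)\circ(\id+\lambda\alpha)=(1+2\lambda\beta)\id+(\lambda+\beta+\lambda\beta)\alpha$ shows that $\id+\lambda\alpha$ is invertible for $\lambda\neq1,-1/2$, hence that the twisted hexagon $\phi+\lambda\alpha\phi=0$ has no nontrivial solutions for those $\lambda$, and it identifies the complementary projector $\mathcal{A}=(\id+\alpha)/3$ onto the anti-hexagon solutions; these byproducts are less immediate from the bare statement $P^{2}=P$, though of course your $P$ is exactly that $\mathcal{A}$.
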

\begin{proof} We construct the solutions as a linear combination of certain pre-compositions: First we define an operator $\alpha:\mathfrak{lie}_2\rightarrow\mathfrak{lie}_2$ by the formula
$(\alpha\varphi)(x,y):=\varphi(y,-x-y)+\varphi(-x-y,x)$.
With the substitution $\alpha$ the hexagon reads
$\phi+\alpha\phi=0$.

For the square $\alpha^2$ we calculate 
$$(\alpha^2\varphi)(x,y)=(\alpha\varphi)(y,-x-y)+(\alpha\varphi)(-x-y,x)$$
$$=\varphi(-x-y,x)+\varphi(x,y)+\varphi(x,y)+\varphi(y,-x-y)$$
hence $\alpha^2=2\id+\alpha$.
Now consider for $\lambda,\beta\in\mathbb{C}$ the operators $\id+\lambda\alpha$ and $\id+\beta\alpha$: It is not difficult to verify
$\bigr(\id+{\beta}\alpha\bigr)\circ\bigr(\id+\lambda\alpha\bigr)=(1+2\lambda\beta)\id+(\lambda+\beta+\lambda\beta)\alpha$.

The previous calculation shows that for all $\lambda\neq1,-1/2$ the operators $\id+\lambda\alpha$ are invertible and also that the Ansatz
$\varphi_\lambda=\varphi+\lambda\alpha\varphi$
is good to produce solutions of $\phi+\alpha\phi=0$, for instance we compute
$$\varphi_\lambda+\alpha\varphi_\lambda=\varphi+\lambda\alpha\varphi+\alpha\varphi+\lambda\alpha^2\varphi=(1+2\lambda)\left(\alpha\varphi+\varphi\right)$$
Hence for every $\varphi$ we can define a solution of $\phi+\alpha\phi=0$ by $\phi_\varphi:=\varphi_{-1/2}=\varphi-\alpha\varphi/2$. If $\varphi$ solves $\varphi+\alpha\varphi=0$ we have
$\varphi_{-1/2}=3\varphi/2$
and finally verified that the normalized operator
$\mathcal{H}:=\left(2\id-\alpha\right)/3:\mathfrak{lie}_2\rightarrow\mathfrak{lie}_2$
projects onto the solutions of $\phi+\alpha\phi=0$.\end{proof}
As mentioned in the previous proof
$\phi(x,y)+\lambda\phi(-x-y,x)+\lambda\phi(y,-x-y)=\forall{x,y}$ admits no non-trivial solutions if $\lambda\neq1,-1/2$ and $\mathcal{A}:\mathfrak{lie}_2\rightarrow\mathfrak{lie}_2$ defined by
$(\mathcal{A}\phi)(x,y):=[\phi(x,y)+\phi(y,-x-y)+\phi(-x-y,x)]/3$
projects onto the solutions of the anti-hexagon equation
$\phi(x,y)-\phi(y,-x-y)/2-\phi(-x-y,x)/2=0\;\forall{x,y}$.

The operator $\mathcal{H}$ is compatible with skew-symmetrization and notice that Drinfeld in his original article \cite{Dri} describes $\mathfrak{grt}$ with the additional equation
\begin{equation}\label{DAE}
[y,\phi(x,y)]+[z,\phi(x,z)]=0\end{equation}
if again the mass shell constraint ${x+y+z=0}$ is satisfied. Drinfeld in \cite{Dri} shows that skew-symmetry combined with hexagon and pentagon imply \ref{DAE} and Ihara shows in \cite{Iha} that skew-symmetry and \ref{DAE} imply the hexagon, hence the projection onto the hexagon solutions and \ref{DAE} or the more involved pentagon equation are compatible.

For abelian groups we have a square root of the map $f(x,y)=(y,-x-y)$, namely the map $(x,y)\rightarrow(x+y,-x)$ squares to $f$. 

The proof of \ref{Hexagon} in fact only uses group structures to produce hexagon solutions and some parts of \ref{Hexagon} can be deduced from the following proposition:

\begin{proposition}\label{BH}
Let $X$ be a set equipped with a map $f:X\rightarrow{X}$ that satisfies the composition relation $f^3=\id$, in other words $f$ represents the cyclic group $\mathbb{Z}_3$. For any map $\varphi:X\rightarrow{G_t}$, where $(G_t,\circ_t,\mathrm{e}_t)$ is a group, the operation
$$\phi_\varphi:=\left(\varphi\circ{f}\right)^{-1}\circ_t\varphi\circ_t\left(\varphi\circ{f^2}\right)^{-1}\circ_t\varphi:{X}\rightarrow{G_t}$$
is a solution of the hexagon equation
$$(\phi\circ{f})\circ_t\phi\circ_t(\phi\circ{f^2})=\mathrm{e}_t$$
\end{proposition}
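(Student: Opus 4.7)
The plan is to reduce the hexagon identity to an elementary computation in $G_t$, exploiting only that $f^3=\id$ makes the forward $f$-orbit of any point a cyclic sequence of length dividing three. First I would fix $x\in X$ and abbreviate $a:=\varphi(x)$, $b:=\varphi(f(x))$, $c:=\varphi(f^2(x))$; because $\varphi(f^3(x))=\varphi(x)=a$, the sole contribution of the hypothesis $f^3=\id$ to the argument is that replacing $x$ by $f(x)$ permutes the triple $(a,b,c)\mapsto(b,c,a)$ cyclically.

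Interpreting $\circ_t$ pointwise and unwinding the definition of $\phi_\varphi$ yields
$$\phi_\varphi(x)=b^{-1}\,a\,c^{-1}\,a,$$
and applying the cyclic shift twice gives
$$\phi_\varphi(f(x))=c^{-1}\,b\,a^{-1}\,b,\qquad \phi_\varphi(f^2(x))=a^{-1}\,c\,b^{-1}\,c.$$
The hexagon equation $(\phi\circ f)\circ_t\phi\circ_t(\phi\circ f^2)=\mathrm{e}_t$ then reduces, at each point $x\in X$, to verifying that the word $(c^{-1} b a^{-1} b)(b^{-1} a c^{-1} a)(a^{-1} c b^{-1} c)$ equals $\mathrm{e}_t$. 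I would check this by two successive telescopings: the cancellation $b\cdot b^{-1}$ at the first junction collapses the prefix to $c^{-1} b a^{-1} a c^{-1} a=c^{-1} b c^{-1} a$, and then $a\cdot a^{-1}$ at the remaining junction collapses the total product to $c^{-1} b c^{-1} c b^{-1} c=c^{-1} b b^{-1} c=\mathrm{e}_t$.

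The only real obstacle is bookkeeping: since $G_t$ is not assumed abelian, I have to respect the order of multiplication throughout and confirm that the substitution $x\mapsto f(x)$ genuinely induces the cyclic permutation of $(a,b,c)$ inside the word defining $\phi_\varphi$. Once this is settled the cancellations are forced by the placement of the letters, which in hindsight explains the seemingly asymmetric choice $\phi_\varphi=(\varphi\circ f)^{-1}\,\varphi\,(\varphi\circ f^2)^{-1}\,\varphi$: the four letters are arranged precisely so that a double telescoping occurs at every $x\in X$.
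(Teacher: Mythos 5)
Your computation is correct and is precisely the "direct check by calculation" that the paper's proof invokes without writing out: fixing $x$, setting $a=\varphi(x)$, $b=\varphi(f(x))$, $c=\varphi(f^2(x))$, and verifying that $(c^{-1}ba^{-1}b)(b^{-1}ac^{-1}a)(a^{-1}cb^{-1}c)$ telescopes to $\mathrm{e}_t$ without any commutativity assumption. Nothing is missing; your observation that $f^3=\id$ enters only through the cyclic shift $(a,b,c)\mapsto(b,c,a)$ is exactly the point.
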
 
\begin{proof} The proof is just a direct check by calculation, we thank G. Schaumann for pointing out that we do not have restrict to an abelian source group in a draft version.\end{proof}
 As pointed out by H. Furusho the solutions in \ref{BH} in some sense look quite similar to a formula in \cite{AT2} where solutions of the Kashiwara-Vergne conjecture are linked to solutions of a pentagon like equation, in this construction of pentagon like solutions another essential ingredient is the associativity of the Baker-Campbell-Hausdorff series.


Notice that we do not claim in \ref{BH} that we get all solutions of the non-abelian hexagon between arbitrary groups, we simplify a bit: For example for every $\varphi:G\rightarrow{G}$ the map $\phi_\varphi(x):=\varphi(x)\circ\varphi^{-1}(x^{-1})$ satisfies the parity equation $\phi_\varphi(x)=\phi_\varphi^{-1}(x^{-1})$ but if $\varphi(x)=\varphi^{-1}(x^{-1})$ then $\phi_\varphi(x):=\varphi(x)\circ\varphi(x)$ and not every group admits square roots. For any binary operation $\varphi:G_s\times{G_s}\rightarrow{G_t}$, where $(G_s,\circ_s,1_s)$ and $(G_t,\circ_t,1_t)$ are groups, the operations $\sigma_\varphi,\tilde{\sigma}_\varphi:G_s\times{G_s}\rightarrow{G_t}$ defined by
$\sigma_\varphi(x,y):=\varphi(x,y)\circ_t\varphi(y,x)^{-1}$
and $\tilde{\sigma}_\varphi(x,y):=\sigma_\varphi(x,y)^{-1}$ obviously solve the skew-symmetry equation
$\sigma(x,y)=\sigma(y,x)^{-1}$
but if we consider arbitrary non-abelian groups the ``symmetric" equation $\sigma(x,y)=\sigma(y,x)$ does not admit an obvious symmetrization procedure. From this point of view the hexagon is in some sense quite natural.

\subsection{Some representations of $\mathbb{Z}_5$ and $\mathbb{Z}_{n+1}$}

\begin{proposition}\label{GH}
Let ${n}\in\mathbb{N}^+$, $(G_s,+,0)$ and $(G_t,+,0)$ be abelian groups and consider an operation
$\varphi:G_s^{\times{n}}\rightarrow{G_t}$ with the property that $\varphi$ is totally symmetric or skew-symmetric, {\em i.e.}
$\varphi(x_{\sigma(1)},\cdots,x_{\sigma(n)})=(\pm)^{M(\sigma)}\varphi(x_1,\cdots,{x}_n)$ respectively, where $M(\sigma)$ is the number of transpositions. The maps
$\phi_\varphi,\Phi_\varphi:G_s^{\times{n}}\rightarrow{G_t}$
defined by
\vspace{-0.3cm}\begin{align*}
&\phi_\varphi(x_1,\cdots,{x}_n):=n\varphi(x_1,\cdots,{x}_n)\mp\sum_{i=1}^{n}\varphi\Biggr(x_1,\cdots{x}_{i-1},-\sum_{j=1}^{n}x_j,x_{i+1},\cdots,{x}_n\Biggr)
\\&\Phi_\varphi(x_1,\cdots,{x}_n):=\varphi(x_1,\cdots,{x}_n)\pm\sum_{i=1}^{n}\varphi\Biggr(x_1,\cdots{x}_{i-1},-\sum_{j=1}^{n}x_j,x_{i+1},\cdots,{x}_n\Biggr)
\end{align*}
are respectively solutions of the hexagon or anti-hexagon equation
\begin{align*}
&\phi(x_1,\cdots,{x}_n)\pm\sum_{i=1}^{n}\phi\Biggr(x_1,\cdots{x}_{i-1},-\sum_{j=1}^{n}x_j,x_{i+1},\cdots,{x}_n\Biggr)=0\quad\forall{x_1,\cdots,x_n}\in{G}_s\\&n\Phi(x_1,\cdots,{x}_n)\mp\sum_{i=1}^{n}\Phi\Biggr(x_1,\cdots{x}_{i-1},-\sum_{j=1}^{n}x_j,x_{i+1},\cdots,{x}_n\Biggr)=0\quad\forall{x_1,\cdots,x_n}\in{G}_s
\end{align*}
\end{proposition}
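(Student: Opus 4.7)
The plan is to mimic the proof strategy of Proposition \ref{Hexagon}. I introduce the substitution operator
$(\alpha\varphi)(x_1,\ldots,x_n) := \sum_{i=1}^{n} \varphi(x_1,\ldots,x_{i-1},-s,x_{i+1},\ldots,x_n)$
on maps $G_s^{\times n} \to G_t$, where $s := \sum_{j=1}^n x_j$. In this notation the hexagon and anti-hexagon equations read $\phi \pm \alpha\phi = 0$ and $n\,\Phi \mp \alpha\,\Phi = 0$, while the two ans\"atze are $\phi_\varphi = (n\,\id \mp \alpha)\varphi$ and $\Phi_\varphi = (\id \pm \alpha)\varphi$. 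Thus the whole proposition reduces to verifying the operator identities $(\id \pm \alpha)(n\,\id \mp \alpha) = (n\,\id \mp \alpha)(\id \pm \alpha) = 0$ on the subspace of totally (skew-)symmetric maps.

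The heart of the proof is the computation of $\alpha^2$ on these two subspaces. Writing $y^{(i)} := (x_1,\ldots,x_{i-1},-s,x_{i+1},\ldots,x_n)$ and observing that the entries of $y^{(i)}$ sum to $-x_i$, one obtains
$(\alpha^2\varphi)(x_1,\ldots,x_n) = \sum_{i=1}^n \sum_{k=1}^n \varphi\bigl(y^{(i)} \text{ with } k\text{-th entry replaced by } x_i\bigr).$
The diagonal terms $k=i$ restore the original tuple and contribute $n\,\varphi(x_1,\ldots,x_n)$. For $i \neq k$ the resulting tuple has $-s$ at position $i$ and $x_i$ at position $k$; transposing these two coordinates produces exactly $y^{(k)}$, so total symmetry or skew-symmetry of $\varphi$ lets me identify the off-diagonal contribution with $\pm\sum_{k}(n-1)\varphi(y^{(k)}) = \pm(n-1)(\alpha\varphi)$. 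This gives $\alpha^2 = n\,\id + (n-1)\alpha$ on the symmetric subspace and $\alpha^2 = n\,\id - (n-1)\alpha$ on the skew-symmetric subspace.

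With these quadratic relations, both factorizations collapse immediately: on symmetric maps $(\id + \alpha)(n\,\id - \alpha) = n\,\id + (n-1)\alpha - \alpha^2 = 0$, and on skew-symmetric maps $(\id - \alpha)(n\,\id + \alpha) = n\,\id - (n-1)\alpha - \alpha^2 = 0$; the mirror factorizations $(n\,\id \mp \alpha)(\id \pm \alpha)$ needed for $\Phi_\varphi$ vanish by the same arithmetic. The only non-trivial step is the bookkeeping of the off-diagonal terms in $\alpha^2$: one must notice that the two tuples in question are related by a single transposition, which is precisely where the total (skew-)symmetry hypothesis enters and closes the argument.
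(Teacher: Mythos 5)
Your proposal is correct and takes essentially the same route as the paper: the paper's proof is the one-line remark that this is ``a direct check where we only transpose in some terms once,'' and your off-diagonal bookkeeping (identifying the tuple with $-s$ at position $i$ and $x_i$ at position $k$ with $y^{(k)}$ after a single transposition) is exactly that check, merely organized through the explicit quadratic relation $\alpha^2=n\,\id\pm(n-1)\alpha$ in the style of the proof of Proposition \ref{Hexagon}.
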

\begin{proof} The proof is just a direct check where we only transpose in some terms once. If $G$ is any non-commutative group the upgrade of the map implicit present in \ref{GH} to a representation of a $n+1$ cycle by a map $G^{\times{n}}:\rightarrow{G}^{\times{n}}$ is the following:
$$(x_1,\cdots,x_n)\rightarrow{P}(x_1,\cdots,x_n):=({x_2}^{-1}\circ\cdots\circ{x_n}^{-1}\circ{x_1}^{-1},x_3,\cdots,x_n,x_1)$$
always satisfies $P^{n+1}=\id$ and hence represents $\mathbb{Z}_{n+1}$, explicit the other iterations are determined by ${P}^2(x_1,\cdots,x_n)=(x_{2},x_{4},x_{5},\cdots,x_n,x_1,{x_{}}^{-1}\circ\cdots\circ{x_n}^{-1}\circ{x_1}^{-1})$, 
${P}^l(x_1,\cdots,x_n)=(x_{l},x_{l+2},x_{l+3},\cdots,x_n,x_1,{x_{}}^{-1}\circ\cdots\circ{x_n}^{-1}\circ{x_1}^{-1},x_2,x_3,\cdots,x_{l-1})$ if $3\leq{l}\leq{n-1}$ and ${P}^n(x_1,\cdots,x_n)=(x_{n},{x_{}}^{-1}\circ\cdots\circ{x_n}^{-1}\circ{x_1}^{-1},x_{2},x_{3},\cdots,x_{n-1})$.

If we suppose $G_t$ is abelian and $0=a_0+a_1+a_2+a_3+\cdots+a_n$ the map $\phi_\varphi:G_s^{\times{n}}\rightarrow{G_t}$ defined by 
$$\phi_\varphi=a_0\varphi+a_1\varphi\circ{P}+a_2\varphi\circ{P}^2+a_3\varphi\circ{P}^3+\cdots+a_n\varphi\circ{P}^n$$
for any $\varphi:G_s^{\times{n}}\rightarrow{G_t}$ is a solution of
$0=\phi+\phi\circ{P}+\phi\circ{P}^2+\phi\circ{P}^3+\cdots+\phi\circ{P}^n$.\end{proof}
For $n=1$ this are just the two parity conditions and for $n=2$ the two hexagons in the abelian case. The parity equation $\rho(x^{-1})=\rho(x)^{-1}$ has also another natural obvious generalization to $n$-ary operations in the non-abelian case: Let ${n}\in\mathbb{N}^+$ and $M$ be a subset of $\{1,\cdots,n\}$ and for $1\leq{i}\leq{n}$ the functions $f^M_i$ operate on a group by
$f^M_i(x)=\begin{cases}x^{-1}\quad\text{if}\;i\in{M}\\x\quad\quad\text{else}\end{cases}$.
For example for any
$\varphi:G_s^{\times{n}}\rightarrow{G_t}$ the operations
$\rho^M_\varphi,\tilde{\rho}^M_\varphi:G_s^{\times{n}}\rightarrow{G_t}$
defined by
$\rho^{M}_\varphi(x_1,\cdots,x_n):=\varphi\left(x_1,\cdots,x_n)\circ_t\varphi(f^M_i(x_1),\cdots,f^M_i(x_n)\right)$
and $\tilde{\rho}^M_\varphi(x_1,\cdots,x_n):=\rho_\varphi(x_1,\cdots,x_n)^{-1}$
satisfy
$\rho(x_1,\cdots,x_n)=\rho\left(f^M_i(x_1),\cdots,f^M_i(x_n)\right)$
but again the ``symmetric" equation $\rho(x_1,\cdots,x_n)=\rho\left(f^M_i(x_1),\cdots,f^M_i(x_n)\right)$ does not always admit any non-constant canonical solution procedure if the set $M$ is non-empty.

It is speculative but the aforementioned pentagon equation \ref{Penta} could be related to $\mathbb{Z}_5$ or a semigroup with $5$ elements. If $n+1$ is a prime number there is no representation of $\mathbb{Z}_{n+1}$ as automorphisms $G^{\times{m}}\rightarrow{G}^{\times{m}}$ for any $m<n$ as a consequence of \cite{Khr}, but we are not aware of an analogous result for semigroups or torsors. However for a field $\mathbb{K}$ it is more or less false that there are no five cycles $\mathbb{K}^{\times{m}}\rightarrow\mathbb{K}^{\times{m}}$ if $m<4$:

Consider the map
$f:\mathbb{K}^{\times2}\setminus\mathcal{I}\rightarrow\mathbb{K}^{\times2}\setminus\mathcal{I}$ defined by $f(x,y):=\left(y,\frac{1-x}{1-xy}\right)$ where $\mathbb{K}$ is a field and $\mathcal{I}=\{(0,x),(x,0),(1,x),(x,1),(x,1/x)\}\subset\mathbb{K}^{\times2}$,
this definition is motivated by the five term-relation of the Bloch-Wigner function. We have the iterations $f^2(x,y)=\left(\frac{1-x}{1-xy},1-xy\right)$, $f^3(x,y)=\left(1-xy,\frac{1-y}{1-xy}\right)$, $f^4(x,y)=\left(\frac{1-y}{1-xy},x\right)$ and $f^5=\id$.

If $D:\mathbb{K}\rightarrow\mathbb{K}$ satisfies $D(x)+D(y)+D\left(\frac{1-x}{1-xy}\right)+D\left(\frac{1-y}{1-xy}\right)+D(1-xy)=0$ then we also have for $D_\pm:\mathbb{K}^{\times2}\rightarrow\mathbb{K}$ defined by $D_\pm(x,y)=D(x)\pm{D}(y)$ the vanishing $\sum_{i=1}^5{D}_\pm\circ{f}^i=0$ and $\varphi\rightarrow(4\varphi-\sum_{i=1}^4\varphi\circ{f}^i)/5$ projects onto solutions of this symmetry.

\section{\textit{Square zero maps associated to Hexagon symmetries}}
There is a bunch of cohomologies associated to the hexagon symmetries, {\em i.e.} maps $f,\tilde{f}:X\rightarrow{X}$ with the composition relations $f^2=\tilde{f}$,$\tilde{f}^2=f$ and $f\circ\tilde{f}=\id=\tilde{f}\circ{f}$, we will give some examples we could figure out.
The following proposition considers $X=G^{\times2}$ and provides us with a map
$\partial:\bigr\{[\cdot,\cdot]:G^{\times2}\rightarrow{G}\;\text{bilinear}\bigr\}\stackrel{\text{linear}}{\rightarrow}\Bigr\{{\text{linear square zero maps}:\{\text{maps}:G^{\times{n}}\rightarrow{G}\}}\hspace{-0.15cm}\xymatrix
{\tiny\ar@(dr,ur)}\quad\;\;\Bigr\}$.
\begin{proposition}\label{1D}
Let $(G,+)$ be an abelian group and
$[\cdot,\cdot]:G^{\times2}\rightarrow{G}$
be a group homomorphism in both arguments, {\em i.e.} $[a+b,c]=[a,c]+[b,c]$ and $[a,b+c]=[a,b]+[a,c]$ holds. If $n\geq2$ we have a linear square zero map
$\partial_{[\cdot,\cdot]}:\{\text{symmetric maps}:G^{\times{n}}\rightarrow{G}\}\hspace{-0.15cm}\xymatrix
{\tiny\ar@(dr,ur)}\quad\;\;$
\begin{align*}
\left(\partial_{[\cdot,\cdot]}\psi\right)(x_1,\cdots,{x}_n)=\Biggr[\sum_{i=1}^{n}x_i,&\psi(x_1,\cdots,x_n)+\sum_{i=1}^{n}\psi\circ{f}^i\left(x_1,\cdots,x_n\right)\Biggr]
\end{align*}
where $f\left(x_1,\cdots,x_n\right)=\left(-\sum_{l=1}^{n}x_l,x_3,x_4,\cdots,\cdots,x_n,x_1\right)$. $\partial_{[\cdot,\cdot]}$ restricts to symmetric maps, here we have explicit the formula $\left(\partial_{[\cdot,\cdot]}\psi\right)(x_1,\cdots,{x}_n)=\big[\sum_{i=1}^{n}x_i,\psi(x_1,\cdots,x_n)+\sum_{i=1}^{n}\psi\left(x_1,\cdots,x_{i-1},-\sum_{l=1}^{n}x_l,x_{i+1}\cdots,x_n\right)\big]$.

If we consider the two maps $f(x,y)=\left(-x-y,x\right)$ and $\tilde{f}(x,y)=\left(y,-x-y\right)$ parity is conserved, {\em i.e.}
$\psi(x,y)=\pm\psi(y,x)\Rightarrow(\partial_{[\cdot,\cdot]}\psi)(x,y)=\pm(\partial_{[\cdot,\cdot]}\psi)(y,x)$.
\end{proposition}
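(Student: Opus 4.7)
Write $\sigma(x_1,\ldots,x_n):=\sum_i x_i$, abbreviate $\partial:=\partial_{[\cdot,\cdot]}$, and let $S\psi:=\sum_{i=0}^{n}\psi\circ f^i$ denote the cyclic symmetrization along the $\mathbb{Z}_{n+1}$-action generated by $f$. The map $f$ is exactly the abelianization of the map $P$ from Proposition~\ref{GH}, hence $f^{n+1}=\id$ and $S\psi$ is automatically $f$-invariant, i.e.\ $S\psi\circ f^j=S\psi$ for every $j$. In this notation the operator reads $(\partial\psi)(x)=[\sigma(x),(S\psi)(x)]$.

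The plan is to exploit biadditivity of $[\cdot,\cdot]$ together with $f$-invariance of $S\psi$ to collapse $\partial^2\psi$. Concretely,
\begin{align*}
(\partial^2\psi)(x)&=\bigl[\sigma(x),(S\partial\psi)(x)\bigr],\\
(S\partial\psi)(x)&=\sum_{i=0}^{n}\bigl[\sigma(f^i(x)),S\psi(f^i(x))\bigr]=\Bigl[\sum_{i=0}^{n}\sigma(f^i(x)),\,S\psi(x)\Bigr],
\end{align*}
so the whole expression vanishes as soon as I verify the identity $\sum_{i=0}^{n}\sigma(f^i(x))=0$. This identity is the single step I expect to be the main obstacle --- not because it is deep, but because it requires unpacking the explicit iterates $f^i$. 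Using the formulas for $P^l$ quoted just above the statement, for $1\le i\le n$ the tuple $f^i(x)$ has multiset of entries $\{x_1,\ldots,x_n,-\sigma(x)\}\setminus\{x_{\tau(i)}\}$ for a permutation $\tau$ of $\{1,\ldots,n\}$ (explicitly the cyclic shift $\tau(i)=i+1$ for $i<n$, $\tau(n)=1$), whence $\sigma(f^i(x))=-x_{\tau(i)}$ and therefore $\sum_{i=0}^{n}\sigma(f^i(x))=\sigma(x)-\sum_{j=1}^{n}x_j=0$.

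For the assertion that $\partial$ preserves symmetric maps and admits the displayed explicit formula, I use that for symmetric $\psi$ only the multiset of arguments matters, so the multiset description above gives $\psi\circ f^i(x)=\psi(x_1,\ldots,x_{\tau(i)-1},-\sigma(x),x_{\tau(i)+1},\ldots,x_n)$; reindexing $j=\tau(i)$ reproduces the displayed formula. Symmetry of $\partial\psi$ then follows because a permutation of the $x_l$ merely reshuffles which slot is replaced by $-\sigma(x)$, so the full sum $S\psi$ is symmetric; combined with symmetry of $\sigma$ and biadditivity of $[\cdot,\cdot]$ this yields the claim.

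Parity conservation for $n=2$ is a direct check: under $(x,y)\leftrightarrow(y,x)$ the three tuples $(x,y)$, $(-x-y,x)$, $(y,-x-y)$ pair off with $(y,x)$, $(-x-y,y)$, $(x,-x-y)$ as coordinate swaps of one another, so applying $\psi(b,a)=\pm\psi(a,b)$ termwise yields $(S\psi)(y,x)=\pm(S\psi)(x,y)$, and combined with the symmetry of $\sigma$ we obtain $(\partial\psi)(y,x)=\pm(\partial\psi)(x,y)$.
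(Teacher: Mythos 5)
Your argument is correct, and it takes a genuinely different (and more structured) route than the paper, whose proof here is only the assertion that squaring to zero is ``a straightforward calculation where we transpose in some terms once'' --- i.e.\ an expansion of $\partial^2\psi$ in which terms are cancelled pairwise using the symmetry of $\psi$. You instead factor the operator as $\partial\psi=[\sigma,S\psi]$ with $S=\sum_{i=0}^{n}(\cdot)\circ f^{i}$ the $\mathbb{Z}_{n+1}$-symmetrization (legitimately importing $f^{n+1}=\id$ from the discussion around Proposition \ref{GH}), use the invariance $S\psi\circ f^{j}=S\psi$ together with biadditivity to collapse $S(\partial\psi)$ to $\bigl[\sum_{i=0}^{n}\sigma\circ f^{i},S\psi\bigr]$, and reduce the whole proposition to the single identity $\sum_{i=0}^{n}\sigma\circ f^{i}=0$, which you verify correctly from the explicit iterates ($\sigma(f^{i}(x))=-x_{\tau(i)}$ with $\tau$ a cyclic shift). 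This buys two things the paper's sketch does not: the square-zero property is established for \emph{all} maps $G^{\times n}\rightarrow G$, not only symmetric ones, with symmetry entering only when you derive the restricted explicit formula and the preservation of symmetry; and the mechanism of the cancellation is isolated in one transparent identity rather than spread over a term-by-term expansion. Your multiset description of $f^{i}(x)$, the reindexing $j=\tau(i)$ recovering the displayed formula on symmetric maps, and the $n=2$ parity check (pairing the three tuples with the coordinate swaps of their images under $(x,y)\mapsto(y,x)$) are all sound.
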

\begin{proof} The proof that $\partial_{[\cdot,\cdot]}$ squares to zero is again just a straight forward calculation where for the $n$-ary maps we only transpose in some terms appearing in $\partial^2_{[\cdot,\cdot]}$ once and use the assumption that $[\cdot,\cdot]:G^{\times{2}}\rightarrow{G}$ is a group homomorphism in its two arguments. \end{proof}

There are also some non-linear square zero maps that use the hexagon symmetries.
\begin{proposition}\label{2D} Let $(G,+)$ be an abelian group and
$[\cdot,\cdot]:G^{\times2}\rightarrow{G}$
a group homomorphism in both arguments. The map
$\partial_{[\cdot,\cdot]}:\{maps:{G^{\times2}}\rightarrow{G}\}\hspace{-0.15cm}\xymatrix
{\tiny\ar@(dr,ur)}\quad\;\;$
defined by
$$\partial_{[\cdot,\cdot]}\psi=[\psi,\psi\circ{f}]+[\psi\circ{f},\psi\circ\tilde{f}]+[\psi\circ\tilde{f},\psi]$$
for $\psi:{X}\rightarrow{G}$ squares to zero
$\partial_{[\cdot,\cdot]}^2=0$.\end{proposition}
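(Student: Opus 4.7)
My plan is to exploit the fact that $(f,\tilde{f})$ generates a $\mathbb{Z}_3$-action on the source: the hypotheses $f^2=\tilde{f}$, $\tilde{f}^2=f$ and $f\circ\tilde{f}=\tilde{f}\circ f=\id$ say that $f$ has order three and $\tilde{f}=f^{-1}=f^2$. Writing $\psi_i:=\psi\circ f^i$ for $i=0,1,2$, the definition reads $\partial\psi=[\psi_0,\psi_1]+[\psi_1,\psi_2]+[\psi_2,\psi_0]$, a cyclic sum of brackets over the $\mathbb{Z}_3$-orbit of $\psi$.

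The first step is to observe that $\partial\psi$ is itself $\mathbb{Z}_3$-invariant, i.e.\ $(\partial\psi)\circ f=\partial\psi$. Precomposition with $f$ sends $\psi_i\mapsto\psi_{i+1}$ (indices mod $3$), so the three bracket summands are merely cyclically permuted and the sum is preserved; the same works with $\tilde f=f^2$. This invariance is the key structural observation and is essentially analogous to the $f$-invariance of the hexagon-type symmetrizations in Propositions \ref{Hexagon} and \ref{BH}.

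Setting $\eta:=\partial\psi$, the equalities $\eta=\eta\circ f=\eta\circ\tilde{f}$ collapse the formula $\partial\eta=[\eta,\eta\circ f]+[\eta\circ f,\eta\circ\tilde{f}]+[\eta\circ\tilde{f},\eta]$ to $3\,[\eta,\eta]$. To conclude $\partial^2\psi=0$ one invokes that the bracket is alternating, $[a,a]=0$ for all $a\in G$. Strictly speaking this is stronger than the biadditivity stated in the hypothesis, but it is the natural Lie-bracket interpretation consistent with the rest of the paper (cf.\ the Ihara bracket in the Introduction) and it is what is tacitly needed here.

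The main obstacle is exactly this last step: without some form of alternativity one is left with $3[\partial\psi,\partial\psi]$, which need not vanish, so the strength of the bracket hypothesis deserves to be flagged. The $\mathbb{Z}_3$-equivariance of $\partial\psi$ itself is routine bookkeeping, but it is the conceptual lever that reduces an a priori $27$-term expansion of $\partial^2\psi$ to the single diagonal bracket $3[\eta,\eta]$, making the vanishing transparent.
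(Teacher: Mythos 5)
The paper offers no proof of Proposition \ref{2D} at all --- it is stated between the proof of \ref{1D} and Proposition \ref{3D} with no proof environment of its own --- so there is nothing to compare your argument against; but your argument is certainly the intended ``straightforward check,'' and it is correct as far as the formal manipulation goes. The observation that $f^3=\id$, $\tilde f=f^2$, and that $\partial\psi$ is therefore $f$-invariant (precomposition with $f$ cyclically permutes the three summands) is exactly the right structural point, and it does collapse $\partial^2\psi$ to $3[\partial\psi,\partial\psi]$ with no computation of individual terms.

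You are also right to flag the final step as the real issue, and in fact the situation is worse than a tacit assumption: as literally stated, with $[\cdot,\cdot]$ only biadditive, the proposition is \emph{false}. Take $G=\mathbb{Z}$, $[a,b]:=ab$ (which is a homomorphism in each argument), and $\psi\equiv 1$ on $G^{\times2}$; then $\partial\psi\equiv 3$ and $\partial^2\psi\equiv 27\neq0$. So the hypothesis must be strengthened to an alternating bracket, $[a,a]=0$; mere skew-symmetry $[a,b]=-[b,a]$ only yields $2[\eta,\eta]=0$, hence $3[\eta,\eta]=[\eta,\eta]$, which can survive as $2$-torsion. That the author intended a skew/alternating bracket is strongly suggested by the neighbouring Proposition \ref{3D}, where skew-symmetry is assumed explicitly and the consequence $[a,a]=\mathrm{e}$ is invoked in the proof. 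In short: your proof is the correct proof of the corrected statement, and your diagnosis of the missing hypothesis is accurate; the only thing I would add is the explicit counterexample showing the hypothesis is not merely convenient but necessary.
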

\begin{proposition}\label{3D}
Let $(G,\circ,\mathrm{e})$ be a group and suppose the binary operation $[\cdot,\cdot]:G^{\times{2}}\rightarrow{G}$ is skew-symmetric, {\em i.e.} $[b,a]=[a,b]^{-1}$ and a group homomorphism in each argument, {\em i.e.} we assume $[a\circ{b},c]=[a,c]\circ[b,c]$ and $[a,{b}\circ{c}]=[a,b]\circ[a,c]$. We have a square $\mathrm{e}$ map $\partial_{[\cdot,\cdot]}:\{maps:G^{\times2}\rightarrow{G}\}\hspace{-0.15cm}\xymatrix
{\tiny\ar@(dr,ur)}\quad\;\;$ defined by
$$\left(\partial_{[\cdot,\cdot]}\psi\right):=\left[\psi,(\psi\circ\tilde{f})\circ(\psi\circ{f})\right]$$
\end{proposition}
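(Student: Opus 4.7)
The plan is to apply $\partial$ twice, track the induced precompositions by $f$ and $\tilde f$ using the cycle relation $f^3 = \id$ (which follows from $f \circ \tilde f = \id$ and $\tilde f = f^2$), and reduce $\partial^2 \psi$ to an expression of the form $[a, a^{-1}]$ whose vanishing is then verified directly. Writing $\psi_i := \psi \circ f^i$ for $i \in \{0,1,2\}$, so that $\psi_0 = \psi$, $\psi_1 = \psi \circ f$, $\psi_2 = \psi \circ \tilde f$, and using that precomposition distributes over the pointwise group operation in $G$, one gets
\[
\partial \psi = [\psi_0,\, \psi_2 \circ \psi_1], \quad
(\partial \psi) \circ f = [\psi_1,\, \psi_0 \circ \psi_2], \quad
(\partial \psi) \circ \tilde f = [\psi_2,\, \psi_1 \circ \psi_0],
\]
where in each formula the inner $\circ$ denotes pointwise multiplication in $G$.

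Before expanding $\partial^2 \psi$ further I would record the standard consequence of bi-homomorphism in both slots: computing $[a \circ b,\, c \circ d]$ by expanding the first slot first and then the second (and vice versa) yields $[a,d] \circ [b,c] = [b,c] \circ [a,d]$, so that any two brackets commute and the image of $[\cdot,\cdot]$ lies in an abelian subgroup of $G$. Products of bracket-valued terms may therefore be reordered freely. With this, the inner factor $((\partial\psi) \circ \tilde f) \circ ((\partial\psi) \circ f) = [\psi_2, \psi_1 \circ \psi_0] \circ [\psi_1, \psi_0 \circ \psi_2]$ expands by bi-homomorphism in the second slot to the four-term product $[\psi_2,\psi_1] \circ [\psi_2,\psi_0] \circ [\psi_1,\psi_0] \circ [\psi_1,\psi_2]$. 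Skew-symmetry gives $[\psi_1,\psi_2] = [\psi_2,\psi_1]^{-1}$, and abelianness of the image cancels that pair; what remains collapses, by bi-homomorphism in the first slot and a final use of skew-symmetry, to $[\psi_2 \circ \psi_1,\, \psi_0] = [\psi_0,\, \psi_2 \circ \psi_1]^{-1}$. Setting $a := [\psi_0,\, \psi_2 \circ \psi_1] = \partial \psi$, this gives $\partial^2 \psi = [a, a^{-1}]$.

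The final step is the identity $[a, a^{-1}] = \mathrm{e}$: bi-homomorphism in the second slot yields $[a, \mathrm{e}] = \mathrm{e}$ and therefore $[a, a^{-1}] = [a,a]^{-1}$, which equals $\mathrm{e}$ once one reads the Lie-type alternating property $[a,a] = \mathrm{e}$ into the skew-symmetry hypothesis. This last reading is the only (minor) point I expect to need care: pure group-theoretic skew-symmetry $[b,a] = [a,b]^{-1}$ combined with bilinearity forces only $[a,a]^2 = \mathrm{e}$, so without an alternating axiom $\partial^2 \psi$ is a priori only guaranteed to be $2$-torsion. Granting the alternating convention — as is natural, given the Lie-bracket flavour of the hypotheses — the computation above closes the proof and the rest is a mechanical verification.
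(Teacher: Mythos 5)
Your computation follows essentially the same route as the paper's: the paper likewise sets $A=\psi$, $B=\psi\circ\tilde{f}$, $C=\psi\circ{f}$, observes that bi-multiplicativity together with skew-symmetry forces the image of $[\cdot,\cdot]$ into an abelian subgroup, and reduces $\partial^2_{[\cdot,\cdot]}\psi$ to an expression of the form $[a,a^{-1}]$ with $a=\partial_{[\cdot,\cdot]}\psi$; you have simply written out in full the expansion the paper declares ``a straight forward calculation''. The caveat you raise at the end is genuine, and it applies to the paper's own argument: the chain $[a,a]=[a,\mathrm{e}]=\cdots=\mathrm{e}$ in the printed proof begins with exactly the alternating property, which does not follow from $[b,a]=[a,b]^{-1}$ plus bi-multiplicativity --- those hypotheses only yield $[a,a]^2=\mathrm{e}$. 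A concrete counterexample is $G=\mathbb{Z}_2$ with $[a,b]:=ab$, which satisfies every stated hypothesis but is not alternating; there one computes $\partial^2_{[\cdot,\cdot]}\psi=\partial_{[\cdot,\cdot]}\psi$, which is nonzero for suitable $\psi$ (e.g.\ $\psi$ taking the values $1,0,1$ along an $f$-orbit). So the proposition as stated needs either the alternating axiom $[a,a]=\mathrm{e}$ or the absence of $2$-torsion in the image of the bracket, exactly as you suspected; granting that, your proof is complete and correct.
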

\begin{proof} Notice that the assumption that $[\cdot,\cdot]$ is a group homomorphism in both arguments and skew-symmetric implies $[a^{-1},b]=[a,b^{-1}]=[a,b]^{-1}=[b,a]$ and $[a,a]=[a,\mathrm{e}]=[\mathrm{e},a]=[a,a^{-1}]=\mathrm{e}$. The proof that $(\partial^2_{[\cdot,\cdot]}\psi)=\mathrm{e}$ is a straight forward calculation, let us write $A=\psi$, $B=\psi\circ\tilde{f}$ and $C=\psi\circ{f}$.
In fact we have under the assumptions the formula
$[a,c]\circ[b,c]=[c,a\circ{b}]^{-1}=[b,c]\circ[a,c]$
hence $[\cdot,\cdot]$ maps to an abelian sub-group.\end{proof}

\section{\textit{Some canonical solvable torsor symmetries}}
Consider a torsor $(X,\tau)$, {\em i.e.} a non-empty set $X$ endowed with a ternary operation
$\tau:X^{\times3}\rightarrow{X}$
that enjoys the following two reflection properties
\begin{equation}\label{Tor1}
\tau(x,y,y)=x=\tau(y,y,x)\quad\forall{x,y,z}\in{X}
\end{equation}
\vspace{-0.4cm}\begin{equation}\label{Tor2}
\tau(\tau(x,y,z),v,w)=\tau(x,y,\tau(z,v,w))\quad\forall{x,y,z,v,w}\in{X}
\end{equation}
Informally a torsor behaves like a group but the notion is more intrinsic, we do not have a preferred identity element in $X$. The standard example of a torsor is a group $G$ as set $X$ and $\tau$ is defined by
$\tau(x,y,z):=x\circ{y}^{-1}\circ{z}$.
\begin{proposition}
For any
$\varphi:X^{\times3}\rightarrow{G}$
where $(X,\tau)$ is a torsor and $(G,\circ,\mathrm{e})$ is a group, the four ternary operations
$\gamma^{\pm}_\varphi,\tilde{\gamma}^{\pm}_\varphi:X^{\times3}\rightarrow{G}$
defined by
$$\gamma^{-}_\varphi(x,y,z):=\varphi\Bigr(\tau(x,y,z),z,y\Bigr)^{-1}\circ\varphi\Bigr(y,x,\tau(x,y,z)\Bigr)$$
$$\gamma^{+}_\varphi(x,y,z):=\varphi\Bigr(\tau(x,y,z),z,y\Bigr)\circ\varphi\Bigr(y,x,\tau(x,y,z)\Bigr)^{-1}$$
and $\tilde{\gamma}^{\pm}_\varphi(x,y,z):=\gamma^{\pm}_\varphi(x,y,z)^{-1}$
are solutions of the equation
\begin{equation}\label{TS}
\gamma\Bigr(\tau(x,y,z),z,y\Bigr)\circ\gamma\Bigr(y,x,\tau(x,y,z)\Bigr)=\mathrm{e}\quad\forall{x,y,z}\in{X}
\end{equation}
\end{proposition}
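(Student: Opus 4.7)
My plan is to reduce the verification of \eqref{TS} to a direct cancellation inside the group $G$, by first exploiting the two torsor axioms \eqref{Tor1} and \eqref{Tor2} to simplify the nested applications of $\tau$ that appear after substitution. I will treat only $\gamma^{-}_\varphi$ in detail; the case of $\gamma^{+}_\varphi$ is entirely analogous (just the opposite order of the two factors), and $\tilde\gamma^{\pm}_\varphi$ then follow from the observation that inverting both sides of \eqref{TS} preserves the equation.

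The concrete steps are as follows. First, I abbreviate $t:=\tau(x,y,z)$ to keep the formulas legible and substitute $(x,y,z)\mapsto(t,z,y)$ and $(x,y,z)\mapsto(y,x,t)$ respectively in the definition of $\gamma^{-}_\varphi$. This produces the inner torsor expressions $\tau(t,z,y)$ and $\tau(y,x,t)$, and the key computational step is to reduce each to a single original variable. For the first, I apply associativity \eqref{Tor2} in the form $\tau(\tau(x,y,z),z,y)=\tau(x,y,\tau(z,z,y))$ and then kill the inner $\tau(z,z,y)=y$ by reflection \eqref{Tor1}, leaving $\tau(x,y,y)=x$. For the second, I use \eqref{Tor2} to rewrite $\tau(y,x,\tau(x,y,z))=\tau(\tau(y,x,x),y,z)$ and then reduce with \eqref{Tor1} twice, obtaining $\tau(y,y,z)=z$. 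Substituting these simplifications yields
\[
\gamma^{-}_\varphi(t,z,y)=\varphi(x,y,z)^{-1}\circ\varphi(z,t,x),\qquad \gamma^{-}_\varphi(y,x,t)=\varphi(z,t,x)^{-1}\circ\varphi(x,y,z),
\]
and composing these two elements of $G$ makes the middle pair $\varphi(z,t,x)\circ\varphi(z,t,x)^{-1}$ cancel, leaving the outer pair $\varphi(x,y,z)^{-1}\circ\varphi(x,y,z)=\mathrm{e}$, which is precisely \eqref{TS}.

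The only real content is the two $\tau$-identities $\tau(\tau(x,y,z),z,y)=x$ and $\tau(y,x,\tau(x,y,z))=z$, so the main obstacle is purely notational bookkeeping: one must apply \eqref{Tor2} in the correct direction and choose the right grouping so that a reflection \eqref{Tor1} becomes available. Once those two reductions are in hand the cancellation in $G$ is immediate and requires no further assumption on $G$ (in particular no commutativity), which is why the same derivation works verbatim for $\gamma^{+}_\varphi$ and, after one inversion, for $\tilde\gamma^{\pm}_\varphi$.
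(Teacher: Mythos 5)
Your proposal is correct and follows essentially the same route as the paper: the same substitutions, the same reductions $\tau(\tau(x,y,z),z,y)=x$ and $\tau(y,x,\tau(x,y,z))=z$ via \ref{Tor2} and \ref{Tor1}, and the same cancellation $\varphi(x,y,z)^{-1}\circ\varphi(z,\tau(x,y,z),x)\circ\varphi(z,\tau(x,y,z),x)^{-1}\circ\varphi(x,y,z)=\mathrm{e}$. The paper likewise treats only $\gamma^{-}_\varphi$ explicitly, merely adding the observation that the underlying maps $f_1,f_2$ are involutions composing to $f_3$ (a Klein four-group in the abelian case), which your argument does not need.
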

\begin{proof}
The proof is straight forward with help of \ref{Tor1} and \ref{Tor2}, we will just prove it for $\gamma^{-}_\varphi$:
$$\gamma^{-}_\varphi(\tau(x,y,z),z,y\Bigr)=\varphi\Bigr(\tau(\tau(x,y,z),z,y),y,z\Bigr)^{-1}\circ\varphi\Bigr(z,\tau(x,y,z),\tau(\tau(x,y,z),z,y\Bigr)$$
$$=\varphi\Bigr(\tau(x,y,\tau(z,z,y)),y,z\Bigr)^{-1}\circ\varphi\Bigr(z,\tau(x,y,z),\tau(x,y,\tau(z,z,y)\Bigr)$$
$$=\varphi\Bigr(\tau(x,y,y),y,z\Bigr)^{-1}\circ\varphi\Bigr(z,\tau(x,y,z),\tau(x,y,y)\Bigr)$$
$$=\varphi\Bigr(x,y,z\Bigr)^{-1}\circ\varphi\Bigr(z,\tau(x,y,z),x\Bigr)$$
$$\gamma^{-}_\varphi(y,x,\tau(x,y,z)\Bigr)=\varphi\Bigr(z,\tau(x,y,z),x\Bigr)^{-1}\circ\varphi\Bigr(x,y,z\Bigr)$$
The calculation is a bit more transparent if we define $f_{1,2,3}:X^{\times3}\rightarrow{X}^{\times3}$ by $f_1(x,y,z)=(\tau(x,y,z),z,y)$, ${f}_2(x,y,z)=(y,x,\tau(x,y,z))$ and ${f}_3(x,y,z)=(z,\tau(x,y,z),x)$.
The previous computations showed $f_1^2=\id={f}_2^2$ and
$(f_1\circ{f}_2)(x,y,z)=(z,\tau(x,y,z),x)=f_3(x,y,z)=({f}_2\circ{f}_1)(x,y,z)$, hence $(\gamma^{\pm}_\varphi\circ{f}_1)\circ(\gamma^{\pm}_\varphi\circ{f}_2)=\mathrm{e}$. Notice if our torsor comes from an abelian group, {\em i.e.} $\tau(x,y,z)=\tau(z,y,x)$ then $f_3^2=\id$ and $f_2\circ{f}_3=f_1$, $f_1\circ{f}_3=f_2$ and hence $\{\id,f_1,f_2,f_3\}$ have the symmetries of the \textbf{Klein four-group}.
\end{proof}
Let $(G,+)$ be an abelian group. The $n$-ary operations $G^{\times{n}}\rightarrow{G}$ have naturally the structure of an abelian group by point wise addition. If $G$ is endowed with a Lie bracket $[\cdot,\cdot]$ then we can define an induced bracket on $n$-ary maps by setting $[\psi_1,\psi_2](x_1,\cdots,x_n)=[\psi_1(x_1,\cdots,x_n),\psi_2(x_1,\cdots,x_n)]$. Notice that although if $[\cdot,\cdot]$ is a Lie bracket, {\em i.e.} skew-symmetric and satisfies the Jacobi identity $[x,[y,z]]+[y,[z,x]]+[z,[x,y]]=0$ the map $\partial{[\cdot,\cdot]}$ is unfortunately not a derivation of the Lie bracket induced on $n$-ary maps.
\begin{proposition}
Let $(X,\tau)$ be a torsor and $(G,\circ,\mathrm{e})$ a group equipped with a skew-symmetric binary map $[\cdot,\cdot]:G^{\times2}\rightarrow{G}$ with $[b,a]=[a,b]^{-1}$. We have a square constant $\mathrm{e}$ map $\partial_{[\cdot,\cdot]}:\{maps:X^{\times3}\rightarrow{G}\}\hspace{-0.15cm}\xymatrix
{\tiny\ar@(dr,ur)}\quad\;\;$ defined by
$$(\partial_{[\cdot,\cdot]}\varphi)(x,y,z)=\left[\varphi\left(\tau(x,y,z),z,y),\varphi(y,x,\tau(x,y,z)\right)\right]$$
\end{proposition}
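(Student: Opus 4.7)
The plan is to mirror the strategy of Propositions \ref{2D} and \ref{3D}: first re-encode $\partial_{[\cdot,\cdot]}$ using the involutions already analyzed in the preceding proposition, then exploit the composition relations among them.

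Concretely, let $f_1,f_2:X^{\times 3}\rightarrow X^{\times 3}$ be the maps $f_1(x,y,z)=(\tau(x,y,z),z,y)$ and $f_2(x,y,z)=(y,x,\tau(x,y,z))$ from the previous proof, so that $\partial_{[\cdot,\cdot]}\varphi=[\varphi\circ f_1,\varphi\circ f_2]$ pointwise. I would start by recalling from that proof that the torsor axioms \ref{Tor1}, \ref{Tor2} imply $f_1^2=\id=f_2^2$ and $f_1\circ f_2=f_2\circ f_1=:f_3$; these composition relations are all that will be needed from the torsor structure.

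Writing $\psi:=\partial_{[\cdot,\cdot]}\varphi$, a direct substitution then yields
\[
\psi\circ f_1=[\varphi\circ f_1^2,\varphi\circ f_2\circ f_1]=[\varphi,\varphi\circ f_3], \quad \psi\circ f_2=[\varphi\circ f_1\circ f_2,\varphi\circ f_2^2]=[\varphi\circ f_3,\varphi].
\]
Skew-symmetry $[b,a]=[a,b]^{-1}$ immediately gives $\psi\circ f_2=(\psi\circ f_1)^{-1}$ pointwise, so setting $A:=\psi\circ f_1$ we reach
\[
\partial_{[\cdot,\cdot]}^2\varphi=[\psi\circ f_1,\psi\circ f_2]=[A,A^{-1}].
\]

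The remaining and only non-formal step, which I expect to be the main obstacle, is to conclude $[A,A^{-1}]=\mathrm{e}$. Bare skew-symmetry only pairs $[A,A^{-1}]$ with its inverse $[A^{-1},A]$ and does not force either to vanish, so some additional input on $[\cdot,\cdot]$ is required. The natural reading, consistent with Propositions \ref{2D} and \ref{3D}, is to demand additionally that $[\cdot,\cdot]$ be a group homomorphism in each argument, so that, as already observed in the proof of \ref{3D}, the image of $[\cdot,\cdot]$ lies in an abelian subgroup in which $[a,a^{-1}]=\mathrm{e}$. Any weaker hypothesis securing $[a,a^{-1}]=\mathrm{e}$ — for instance $[\cdot,\cdot]$ being a group commutator — would likewise close the proof.
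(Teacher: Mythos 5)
Your reduction is the right one and is evidently what the paper intends: the paper supplies no proof of this proposition at all, but the composition relations $f_1^2=f_2^2=\id$ and $f_1\circ f_2=f_2\circ f_1=f_3$ established in the proof of the preceding proposition are exactly the available input, and your computation $\partial^2_{[\cdot,\cdot]}\varphi=[A,A^{-1}]$ with $A=[\varphi,\varphi\circ f_3]$ is correct. Your diagnosis of the final step is also correct, and is worth stating more strongly: under the hypothesis as written (skew-symmetry alone) the proposition is false, not merely unproved. Take $G=\mathbb{Z}_2=\{\mathrm{e},g\}$ and the constant bracket $[a,b]=g$; this satisfies $[b,a]=[a,b]^{-1}$ since $g=g^{-1}$, yet $\partial_{[\cdot,\cdot]}\varphi\equiv g$ and hence $\partial^2_{[\cdot,\cdot]}\varphi\equiv[g,g]=g\neq\mathrm{e}$. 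So an additional hypothesis is genuinely required, exactly as you say.

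One caveat on your proposed repair. Adding that $[\cdot,\cdot]$ is a group homomorphism in each argument yields $[A,A^{-1}]=[A,A]^{-1}$, but skew-symmetry plus bilinearity only forces $[A,A]^2=\mathrm{e}$, not $[A,A]=\mathrm{e}$: the bilinear form $[a,b]=ab$ on the additive group $\mathbb{Z}_2$ is skew-symmetric (since $-x=x$ there) and has $[1,1]=1$. This is the same $2$-torsion gap that is latent in the paper's own proof of Proposition \ref{3D}, where $[a,a]=\mathrm{e}$ is asserted without justification. The clean hypothesis is that the bracket satisfy $[a,a^{-1}]=\mathrm{e}$ directly (equivalently $[a,a]=\mathrm{e}$ in the bilinear case), which holds for instance for the group commutator, as you note; with that, $[A,A^{-1}]=\mathrm{e}$ and your argument closes.
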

\begin{proposition}
Let $(X,\tau)$ be a torsor and $(G,+,0)$ be an abelian group equipped with a bilinear binary map $[\cdot,\cdot]:G^{\times2}\rightarrow{G}$. For every $\gamma:X^{\times3}\rightarrow{G}$ that solves $\gamma{f}_1+\gamma{f}_2=0$ we have two square zero maps $\partial^\gamma_{[\cdot,\cdot]}:\{maps:X^{\times3}\rightarrow{G}\}\hspace{-0.15cm}\xymatrix
{\tiny\ar@(dr,ur)}\quad\;\;$ by
$$\partial^\gamma_{[\cdot,\cdot]}\varphi=\left[\gamma,\varphi\circ{f}_1\pm\varphi\circ{f}_2\right]$$
and we have also $\partial^\gamma_{[\cdot,\cdot]}\varphi{f}_1\pm\partial^\gamma_{[\cdot,\cdot]}\varphi{f}_2=0$. In the $+$-case: If $[\cdot,\cdot]$ is a Lie bracket the induced brackets on ternary operations satisfies the modified Leibniz rule
\begin{align}\label{Strange}
\partial^\gamma_{[\cdot,\cdot]}[\varphi,\phi+\phi\circ{f_3}]=\partial^\gamma_{[\cdot,\cdot]}[\varphi+\varphi\circ{f_3},\phi]=\partial^{\partial^\gamma_{[\cdot,\cdot]}\varphi}_{[\cdot,\cdot]}\phi-\partial^{\partial^\gamma_{[\cdot,\cdot]}\phi}_{[\cdot,\cdot]}\varphi
\end{align}
\end{proposition}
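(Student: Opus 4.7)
The plan is to verify all three claims by direct computation, leveraging the identities $f_1^2=f_2^2=\mathrm{id}$ and $f_1\circ f_2=f_2\circ f_1=f_3$ (so in particular $f_3\circ f_1=f_2$ and $f_3\circ f_2=f_1$) established in the preceding proposition, together with bilinearity of $[\cdot,\cdot]$ and the standing hypothesis $\gamma\circ f_1+\gamma\circ f_2=0$.

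First I would establish the auxiliary antisymmetry $\partial^\gamma_{[\cdot,\cdot]}\varphi\circ f_1\pm\partial^\gamma_{[\cdot,\cdot]}\varphi\circ f_2=0$, from which the square-zero property follows immediately by feeding the vanishing combination back into the outer bracket. Writing $\psi:=\partial^\gamma_{[\cdot,\cdot]}\varphi$ and pulling $f_1,f_2$ through the pointwise bracket, one obtains $\psi\circ f_1=[\gamma\circ f_1,\,\varphi\pm\varphi\circ f_3]$ and $\psi\circ f_2=[\gamma\circ f_2,\,\varphi\circ f_3\pm\varphi]$. Forming the combination with matching outer sign and using bilinearity, the second arguments align and the expression reduces to a bracket with first entry $\gamma\circ f_1+\gamma\circ f_2=0$, hence vanishes. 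Substituting this back gives $(\partial^\gamma_{[\cdot,\cdot]})^2\varphi=[\gamma,\,0]=0$ in both sign cases.

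For the modified Leibniz rule (in the $+$-case, with $[\cdot,\cdot]$ a Lie bracket), I would expand each of the three expressions and identify a common symmetrized bracket. Applying the same pullback rule, $[\varphi,\phi+\phi\circ f_3]\circ f_i=[\varphi\circ f_i,\,\phi\circ f_i+\phi\circ f_3\circ f_i]$, and then using $f_3\circ f_1=f_2$ and $f_3\circ f_2=f_1$, the summand inside $\partial^\gamma_{[\cdot,\cdot]}$ rearranges by bilinearity into $[P,Q]$, where $P:=\varphi\circ f_1+\varphi\circ f_2$ and $Q:=\phi\circ f_1+\phi\circ f_2$. The identical manipulation applied to $\partial^\gamma_{[\cdot,\cdot]}[\varphi+\varphi\circ f_3,\phi]$ produces the same expression $[\gamma,[P,Q]]$, establishing the first equality. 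It then remains to verify $[\gamma,[P,Q]]=[[\gamma,P],Q]-[[\gamma,Q],P]$, which is precisely the Jacobi identity combined with skew-symmetry of $[\cdot,\cdot]$.

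The main obstacle is purely notational: careful sign bookkeeping in the $\pm$ part and recognizing that the four bilinear summands emerging from the expansion repackage into a single outer bracket $[P,Q]$ of two symmetrized arguments. Once that regrouping is spotted, the final reduction to Jacobi is one line, and no additional torsor axioms beyond the relations among $f_1,f_2,f_3$ already invoked are needed.
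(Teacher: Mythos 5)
Your proposal is correct and follows essentially the same route as the paper's (very terse) proof: both rest on the composition relations $f_1^2=f_2^2=\id$, $f_1\circ f_2=f_2\circ f_1=f_3$, $f_3\circ f_1=f_2$, $f_3\circ f_2=f_1$, bilinearity, and a single application of the Jacobi identity. You supply more detail than the paper does, and your repackaging of the Leibniz rule as $[\gamma,[P,Q]]=[[\gamma,P],Q]-[[\gamma,Q],P]$ with $P=\varphi\circ f_1+\varphi\circ f_2$, $Q=\phi\circ f_1+\phi\circ f_2$ is just a cleaner arrangement of the equivalent identity the paper writes out.
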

\begin{proof}
We have $f_1^2=f_2^2=\id$, $f_1\circ{f}_2=f_2\circ{f}_1=f_3$ and $f_3\circ{f}_1=f_2$, $f_3\circ{f}_2=f_1$. The rest is just a small calculation with the Jacobi identity and \ref{Strange} is equivalent to
\begin{align*}
\partial^\gamma_{[\cdot,\cdot]}[\varphi,\phi]=&\bigr[\partial^\gamma_{[\cdot,\cdot]}\varphi,\phi\circ{f}_1+\phi\circ{f}_2\bigr]+\bigr[\varphi\circ{f}_1+\varphi\circ{f}_2,\partial^\gamma_{[\cdot,\cdot]}\phi\bigr]\\&-\bigr[\gamma,[\varphi\circ{f}_1,\phi\circ{f}_2]+[\varphi\circ{f}_2,\phi\circ{f}_1]\bigr]\vspace{-1.3cm}
\end{align*}\end{proof}
\begin{proposition}
Let $(X,\tau)$ be a torsor. The map $\iota:X^{\times3}\rightarrow{X}^{\times3}$ defined by the formula $\iota(x,y,z):=(\tau(y,x,z),x,\tau(\tau(y,x,z),x,y))$ is a $3$-cycle, for instance we have the identities $\iota^2(x,y,z):=(y,\tau(y,x,z),\tau(y,\tau(y,x,z),x))$ and $f^3=\id$.
\end{proposition}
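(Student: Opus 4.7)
The plan is a brute-force verification that uses nothing beyond the two torsor axioms \ref{Tor1} and \ref{Tor2}. To keep the nested $\tau$'s manageable, I first abbreviate $u:=\tau(y,x,z)$, so that $\iota(x,y,z)=(u,\,x,\,\tau(u,x,y))$. Then I compute $\iota^{2}(x,y,z)=\iota(u,x,\tau(u,x,y))$ coordinate by coordinate, where the middle entry is obviously $u$.

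The one trick worth highlighting is the repeated use of the associativity \ref{Tor2} read \emph{from right to left}, that is, in the form $\tau(a,b,\tau(c,d,e))=\tau(\tau(a,b,c),d,e)$. Applied to the first entry this gives $\tau(x,u,\tau(u,x,y))=\tau(\tau(x,u,u),x,y)$, after which the reflection axiom \ref{Tor1} collapses $\tau(x,u,u)=x$ and then $\tau(x,x,y)=y$. The third coordinate of $\iota^{2}$ then reduces in the same way to $\tau(y,u,x)$, reproducing the formula $\iota^{2}(x,y,z)=(y,\tau(y,x,z),\tau(y,\tau(y,x,z),x))$ stated in the proposition.

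For the third iterate, set $w:=\tau(y,u,x)$ and compute $\iota^{3}(x,y,z)=\iota(y,u,w)$. Its first entry $\tau(u,y,w)=\tau(u,y,\tau(y,u,x))$ succumbs to the same rebracketing trick: it rewrites as $\tau(\tau(u,y,y),u,x)=\tau(u,u,x)=x$. The third entry $\tau(x,y,u)=\tau(x,y,\tau(y,x,z))$ becomes $\tau(\tau(x,y,y),x,z)=\tau(x,x,z)=z$ by the identical pattern. Assembling the three coordinates one finds $\iota^{3}(x,y,z)=(x,y,z)$, so $\iota^{3}=\id$ as claimed.

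The proof is mechanical once the pattern is recognized; there is no real obstacle beyond keeping track of abbreviations and spotting each place where \ref{Tor2} should be applied in the reverse direction to manufacture a $\tau(a,b,b)$ or $\tau(b,b,a)$ block that \ref{Tor1} can then annihilate. In this sense the statement is really a compact packaging of the torsor axioms: every nontrivial cancellation in $\iota^{2}$ and $\iota^{3}$ is one application of \ref{Tor2} followed by one application of \ref{Tor1}.
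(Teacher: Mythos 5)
Your proof is correct and is exactly the direct verification the paper only alludes to: the paper's ``proof'' consists solely of the remark that, unlike other symmetries, this one needs only \ref{Tor1} and \ref{Tor2} and not the heap condition, and your computation (which indeed invokes nothing beyond those two axioms, each cancellation being one reverse application of \ref{Tor2} followed by \ref{Tor1}) confirms that claim while supplying the omitted details. The formulas for $\iota^2$ and $\iota^3$ you obtain agree with the proposition (note the statement's ``$f^3=\id$'' is a typo for $\iota^3=\id$).
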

\begin{proof} For most other symmetries we could figure out we need to add to \ref{Tor1}, \ref {Tor2} the heap condition
$\tau(\tau(x,y,z),v,w)=\tau(x,\tau(v,z,y),w)$
but here the proof works without it.
\end{proof}
\begin{bibdiv}
\begin{biblist}
\bib{AT2}{article}{
author={Alekseev~A.},
author={Torossian~C.},
title= {The Kashiwara-Vergne conjecture and Drinfeld's associators},
journal={Ann. of Math. (2) \textbf{175}, no. 2, 415-463},
date={2012},
}
\bib{DBN}{article}{
author={Bar-Natan~D.}
author={Dancso~Z.}
title= {Pentagon and hexagon equations following Furusho},
journal={ arxiv},
date={2010},
}
\bib{Dri}{article}{
author={Drinfeld~V.G.},
title= { On quasi-triangular quasi-Hopf algebras and a group closely connected with $Gal(\overline{\mathbb{Q}}/\mathbb{Q})$},
journal={Leningrad Math. J. Vol. \textbf{2} , No. 4, 829-860},
date={1991},
}
\bib{DoSt}{article}{
author={Dolgushev~V.A.}
title= { Stable formality quasi-isomorphisms for Hochschild cochains I},
journal={ arxiv},
date={2011},
}
\bib{Fu}{article}{
author={Furusho~H.},
title= { Pentagon and hexagon equations},
journal={Ann. Math.,\textbf{171} (1):545-556},
date={2010},
}
\bib{Ha}{article}{
author={Halbout~G.},
title= { Globalization of Tamarkin's formality theorem},
journal={Lett. Math. Phys.\textbf{71}, 1, 39-48},
date={2005},
}
\bib{Iha}{article}{
author={Ihara~Y.},
title= {On the stable derivation algebra associated with some braid groups},
journal={Israel J. Math. \textbf{80}, no. 1-2, 135-153},
date={1992},
}
\bib{Khr}{article}{
   author={Khramtsov~D.G:},
   title={Finite groups of automorphisms of free groups},
   journal={Mat. Zametki \textbf{38}, No. 3, 386-392},
   date={1985},
}
\bib{K}{article}{
   author={Kontsevich~M.},
   title={Deformation quantization of Poisson manifolds},
   journal={arxiv}
   date={1999},
}
\bib{KoM}{article}{
   author={Kontsevich~M.},
   title={Operads and motives in deformation quantization},
   journal={Lett. Math. Phys. Vol. \textbf{66}, No. 3, 157-216},
   date={2003},
}
\bib{ShTsy}{article}{
   author={Shoikhet~B.},
   title={A proof of Tsygan formality conjecture for chains},
   journal={arxiv},
date={2000}
}
\bib{TaF}{article}{
author={Tamarkin~D.},
title= { Another proof of M. Kontsevich formality theorem},
journal={E-Preprint math. QA/9803025},
date={1998},
}
\bib{TRT}{article}{
author={Willwacher T.},
title={M. Kontsevich's graph complex and the Grothendieck-Teichmueller Lie algebra},
journal={ arxiv},
date={2010},
}


\bib{WiGRT}{article}{
author={Willwacher T.},
title={The Grothendieck-Teichm\"uller group action on differential forms and formality morphism of chains},
date={2014},
journal={arxiv},
}
\end{biblist}
\end{bibdiv}
\vfill{\begin{center}
\textcircled{c} Copyright by Johannes L\"offler, 2014, All Rights Reserved
\end{center}}
\pagebreak
\end{document}